\documentclass[12pt,reqno]{amsart}

\usepackage{latexsym, amsmath, amssymb, amscd}

\topmargin=0.1in

\usepackage{mathrsfs}

\newtheorem{theorem}{Theorem}[section]

\newtheorem{proposition}[theorem]{Proposition}

\theoremstyle{definition}
\newtheorem{definition}[theorem]{Definition}

\theoremstyle{remark}
\newtheorem{remark}[theorem]{Remark}

\numberwithin{equation}{section}

% Spacing Adjustments
%   +5/18 em = \;     +4/18 em = \:     +3/18 em = \,     +2/18 em = \ts     +1/18 em = \tts
\newcommand{\ts}{\hspace{.11111em}}
\newcommand{\tts}{\hspace{.05555em}}
%   -3/18 em = \!     -2/18 em = \nts     -1/18 em = \ntts

% Letters

% Blackboard Letters

%Topology
\DeclareMathOperator{\cat}{\operatorname{\mathsf{Cat}}\tts} %

% Riemannian Geometry
\DeclareMathOperator{\G}{\operatorname{\mathcal{G}}}  % Riemannian metric 

\DeclareMathOperator{\area}{\operatorname{\mathsf{Area}}\tts}
\DeclareMathOperator{\vol}{\operatorname{\mathsf{Vol}}\tts}
\DeclareMathOperator{\inj}{\operatorname{\mathsf{Inj}}\tts} % Injectivity radius
 % Injectivity radius
\DeclareMathOperator{\emb}{\operatorname{\mathsf{Emb}}\tts} % Embolic volume 
\DeclareMathOperator{\MinEnt}{\operatorname{\mathsf{Min Ent}}\tts} % Minimal volume entropy 

% Smooth/Topological Manifolds

% Systolic geometry
\DeclareMathOperator{\sys}{\operatorname{\mathsf{Sys}}\tts}

\DeclareMathOperator{\SR}{\operatorname{\mathsf{SR}}\tts}

% 3-manifolds

% Miscellaneous
\DeclareMathOperator{\dens}{\operatorname{\mathsf{Density}}\tts}

\begin{document}
 
\baselineskip.525cm

% \linenumbers
 
\title{Covering trick and embolic volume}

\author[L.~Chen]{Lizhi Chen}

\thanks{}

 \address{
\hspace*{0.055in}School of Mathematics and Statistics, Lanzhou University \newline
\hspace*{0.175in} Lanzhou 730000, P.R. China 
}

\email{\hspace*{0.025in} lizhi@ostatemail.okstate.edu}

\thanks{Supported by NSFC Grant No. 11901261 , and the Fundamental Research Funds for the Central Universities of No. lzujbky-2017-26 }

\subjclass[2010]{Primary 53C23, Secondary 53C20.}

\keywords{Embolic volume, Covering trick, Injectivity radius, Embolic inequality}

\date{\today}

\begin{abstract} 
 Embolic volume of compact manifolds is defined in terms of Berger's embolic inequality. In this paper, we show a result of relating embolic volume to the first Betti number. The proof relies on Gromov's covering argument appeared in systolic geometry. Berger called this method ``covering trick''. We exploit and present more details to covering trick in the paper.   
\end{abstract}

\maketitle 

\tableofcontents

\section{Introduction}

Main result of this paper is a relation between embolic volume and the first Betti number of compact Riemannian manifolds. The proof relies on Gromov's covering argument appeared in systolic geometry, see Gromov \cite[Section 5.3.]{Gromov1983}. In \cite[Section 7.2.1.2., Lemma 125]{Berger2003}, Berger named this method as ``covering trick''. In the paper, we further exploit and present more details to a refined version of covering trick (see \cite[Theorem 5.3.B.]{Gromov1983}).

Let $M$ be an $n$-dimensional manifold endowed with a Riemannian metric $\G$, denoted $(M, \G)$. Denote by $\inj (M, \G)$ the injectivity radius of $(M, \G)$. 
\begin{theorem}[Berger 1980, see Berger \cite{Berger1980} or {Berger \cite[Theorem 148]{Berger2003} }] \label{Berger1980_embolic_inequality}
 A compact $n$-dimensional Riemannian manifold $(M, \G)$ satisfies
 \begin{equation}\label{Berger_embolic}
  \inj(M, \G)^n \leqslant \alpha_n \vol_{\G}(M),
 \end{equation}
 where $\alpha_n$ is a constant only depending on the manifold dimension $n$. 
\end{theorem}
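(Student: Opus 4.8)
The plan is to bound $\vol_{\G}(M)$ from below by the volume of a single embedded geodesic ball of radius $\inj(M,\G)$, and then to estimate that ball's volume in geodesic polar coordinates by comparing its radial volume density with the density on a round sphere. Concretely, set $i := \inj(M,\G)$ and fix any $p \in M$. Since $i$ is at most the injectivity radius at $p$, the exponential map $\exp_p$ restricts to a diffeomorphism from the Euclidean ball of radius $i$ in $T_pM$ onto the metric ball $B(p,i) \subseteq M$, so $\vol_{\G}(M) \geqslant \vol_{\G}(B(p,i))$; it therefore suffices to produce a constant $\beta_n > 0$ depending only on $n$ with $\vol_{\G}(B(p,i)) \geqslant \beta_n\, i^{n}$, since \eqref{Berger_embolic} then holds with $\alpha_n = \beta_n^{-1}$. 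In geodesic polar coordinates about $p$,
\[
 \vol_{\G}\big(B(p,i)\big) \;=\; \int_{U_pM} \left( \int_0^{i} \theta(t,v)\, dt \right) d\sigma(v),
\]
where $U_pM \subset T_pM$ is the unit sphere with its standard measure $d\sigma$ (of total mass $\omega_{n-1} := \vol(S^{n-1})$) and $\theta(t,v)\,dt\,d\sigma(v)$ is the pull-back under $\exp_p$ of the Riemannian volume element, normalised so that $\theta(t,v)/t^{n-1} \to 1$ as $t \to 0^{+}$.

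The core of the argument is a one-variable comparison for the inner integral. Fix $v \in U_pM$. Because $i$ does not exceed the distance from $p$ to its cut locus, the geodesic $t \mapsto \exp_p(tv)$ is minimizing on $[0,i]$, hence carries no conjugate point on the open interval $(0,i)$; equivalently $\theta(\cdot,v) > 0$ there, while still $\theta(t,v) \sim t^{n-1}$ as $t \to 0^{+}$. The input I would invoke is a comparison inequality of Berger--Kazdan type: among all volume densities that can occur along a conjugate-point-free geodesic segment of length $i$, the quantity $\int_0^{i}\theta\, dt$ is minimised, in the limiting case, by the round-sphere density $\theta(t,v) = (i/\pi)^{n-1}\big(\sin(\pi t / i)\big)^{n-1}$, so that
\[
 \int_0^{i}\theta(t,v)\, dt \;\geqslant\; c_n\, i^{n}, \qquad c_n := \pi^{-n}\!\int_0^{\pi}(\sin s)^{n-1}\, ds .
\]
(For the theorem only the existence of \emph{some} positive $c_n$ of this shape is required; the sharp constant and its extremal configuration are a delicate and partly open matter.) Integrating over $v \in U_pM$ and substituting into the polar-coordinate formula gives
\[
 \vol_{\G}(M) \;\geqslant\; \vol_{\G}\big(B(p,i)\big) \;\geqslant\; \omega_{n-1}\, c_n\, i^{n},
\]
so the reduction above applies with $\beta_n := \omega_{n-1}\, c_n$, and \eqref{Berger_embolic} holds with $\alpha_n = \beta_n^{-1}$, a constant depending only on $n$.

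The step I expect to be the genuine obstacle is the Berger--Kazdan-type comparison. The only structural fact available about $\theta(\cdot,v)$ is the \emph{qualitative} absence of interior conjugate points --- equivalently, nonsingularity on $(0,i)$ of the Jacobi tensor field along the geodesic vanishing at $t = 0$ --- and this must be upgraded to a \emph{quantitative} lower bound on $\int_0^{i}\theta(t,v)\,dt$, the delicate point being to control how fast the density may collapse as $t \uparrow i$. By contrast, the reduction to a single embedded ball and the passage to geodesic polar coordinates are routine.
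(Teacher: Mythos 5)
The paper does not prove Theorem~\ref{Berger1980_embolic_inequality}; it cites Berger~\cite{Berger1980} and \cite[Theorem 148]{Berger2003}, so there is no in-paper proof to compare against. That said, your proposal reproduces the actual literature proof: one fixes $p$, notes that $\exp_p$ is a diffeomorphism on the ball of radius $i=\inj(M,\G)$, passes to geodesic polar coordinates, and applies a radial comparison along each direction $v$ to the density $\theta(\cdot,v)$ using only that the radial geodesic has no interior conjugate point on $(0,i)$. The comparison you flag as the genuine obstacle is exactly Kazdan's inequality (proved by Kazdan in 1978, reproduced as an appendix in Besse's \emph{Manifolds all of whose geodesics are closed}), which states that for such a conjugate-point-free segment
\[
\int_0^{i}\theta(t,v)\,dt \;\geqslant\; \int_0^{i}\Bigl(\tfrac{i}{\pi}\sin\tfrac{\pi t}{i}\Bigr)^{n-1}dt,
\]
with equality only for the round-sphere density; integrating over $v$ this yields $\vol_{\G}(M)\geqslant \sigma_n\,i^n/\pi^n$, the sharp constant, which is exactly Berger's bound. (Incidentally, your derivation gives $\alpha_n=\pi^n/\sigma_n$ in the sense of display~\eqref{Berger_embolic}; the paper's parenthetical remark that Berger's constant is $\sigma_n/\pi^n$ appears to have inverted the ratio, since $\emb(S^n)=\sigma_n/\pi^n$ forces $\vol\geqslant(\sigma_n/\pi^n)\inj^n$.) Two small cautions worth keeping in mind if you were to write this out in full: the inequality must be applicable when the endpoint $t=i$ is \emph{not} conjugate (the Blaschke-conjecture version is usually stated for a segment ending at a conjugate point; the general case follows by a rescaling/limiting argument or by applying the critical-length version on a slightly longer interval), and the statement is genuinely a theorem --- there is no soft monotonicity argument that delivers it, so citing Kazdan is unavoidable rather than a convenience. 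With those caveats, your proposal is a correct account of the standard proof, and you have correctly located where the nontrivial content lives.
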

The constant $\alpha_n$ in (\ref{Berger_embolic}) given by Berger is $\frac{\sigma_n}{\pi^n}$, where $\sigma_n$ is the volume of unit Euclidean $n-$sphere. Our goal in this paper is to study the constant $\alpha_n$ in terms of topology. 
\begin{definition}
 The embolic volume of a compact $n$-dimensional manifold $M$, denoted $\emb (M)$, is defined as
 \begin{equation*}
  \inf_{\G} \frac{\vol_{\G}(M)}{\inj (M, \G)^n},
 \end{equation*}
 where the infimum is taken over all Riemannian metrics $\G$ on $M$. 
\end{definition}
\begin{remark}
 In Berger \cite{Berger2003}, $\emb (M)$ is called ``embolic constant''. In this paper, we use ``embolic volume'' appeared in Berger \cite[Section TOP.1.C., Page 102.]{Berger2000}. Other names of $\emb(M)$ include isoembolic volume (see \cite{Durumeric2002}) and optimal isoembolic constant ( see \cite{KaSa2005}).
\end{remark}

In terms of Berger's theorem (\ref{Berger1980_embolic_inequality}),
\begin{equation*}
 \emb (M) \geqslant \frac{\sigma_n}{\pi^n}
\end{equation*} 
holds for all compact $n$-manifolds $M$. Moreover, the equality holds for $n$-sphere $S^n$, and realized by standard Riemannian metric on $S^n$. The value $\frac{\sigma_n}{\pi^n}$ is isolated, that is, 
\begin{equation*}
 \emb(M) \geqslant \frac{\sigma_n}{\pi^n} + C
\end{equation*}
holds for some positive constant $C$ if $M$ is not $S^n$. 

In this paper, we show a relation concerning embolic volume and the first Betti number. Main result of this paper is as follows. 
\begin{theorem}\label{theorem_main}
 The embolic volume has the following lower bound in terms of the first Betti number $b_1 (M)$,
 \begin{equation}\label{embolic}
  \emb (M) \geqslant C_n \frac{b_1 (M)}{\exp{ \left( C_n^{\prime} \sqrt{b_1 (M)} \right) }} ,
 \end{equation}
 where $C_n$ and $C_n^{\prime}$ are two constants only depending on the manifold dimension $n$, and can be explicitly given. 
\end{theorem}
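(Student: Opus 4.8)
The plan is to run Gromov's covering argument — the ``covering trick'' of \cite[Theorem~5.3.B]{Gromov1983} and \cite[Lemma~125]{Berger2003} — directly in the embolic setting, with the injectivity radius playing the role of the systole throughout. We may assume $b:=b_1(M)\ge 1$, since otherwise \eqref{embolic} is vacuous. Fix a Riemannian metric $\G$ on $M$; as $\vol_{\G}(M)/\inj(M,\G)^n$ is scale invariant we may normalize so that $\inj(M,\G)=1$, and it suffices to prove $\vol_{\G}(M)\ge C_n\,b\,\exp(-C_n'\sqrt b)$. (Alternatively, \eqref{embolic} can be deduced from Gromov's systolic inequality \cite[Theorem~5.3.B]{Gromov1983} together with the elementary estimate $\sys_1(M,\G)\ge 2\,\inj(M,\G)$; the point here is rather to give a self-contained embolic version in which the ball-volume estimates used are exactly those behind Theorem~\ref{Berger1980_embolic_inequality}.)

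The first step is to pass to the covering $p\colon\bar M\to M$ associated to the kernel of $\pi_1(M)\to H_1(M;\Z)/\mathrm{torsion}\cong\Z^b$, equipped with the pulled-back metric; then $p$ is a local isometry with deck transformation group $\Gamma\cong\Z^b$. Two elementary facts drive the argument. (i) Injectivity radius does not decrease under coverings, so $\inj(\bar M)\ge\inj(M)=1$; hence $\vol_{\G}(B(x,r))\ge c_n r^n$ for every ball of radius $r\le 1$ in $\bar M$, by the standard ball-volume estimate underlying Berger's inequality. (ii) A loop in $M$ of length less than $2$ lies in an embedded metric ball of radius less than $\inj(M)$, hence is contractible and a fortiori null-homologous; so for a fixed lift $\bar x_0$ of a basepoint and every $\gamma\in\Gamma\setminus\{e\}$ one has $d_{\bar M}(\bar x_0,\gamma\bar x_0)\ge 2$. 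In particular the orbit $\Gamma\bar x_0$ is $2$-separated, and the balls $B(\gamma\bar x_0,1)$, $\gamma\in\Gamma$, are pairwise disjoint and embedded.

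Next I would introduce the orbit-counting function $N(\rho):=\#\{\gamma\in\Gamma : d_{\bar M}(\bar x_0,\gamma\bar x_0)\le\rho\}$ and compare two volume estimates for a ball $B(\bar x_0,R)$, $R\ge 1$. On one hand $B(\bar x_0,R)$ contains the disjoint balls $B(\gamma\bar x_0,1)$ for all $\gamma$ with $d_{\bar M}(\bar x_0,\gamma\bar x_0)\le R-1$, so $\vol_{\G}(B(\bar x_0,R))\ge c_n\,N(R-1)$. On the other hand, any two points of $B(\bar x_0,R)$ lying over the same point of $M$ differ by a deck transformation displacing $\bar x_0$ by less than $2R$, so $p|_{B(\bar x_0,R)}$ has multiplicity at most $N(2R)$ over each point and $\vol_{\G}(B(\bar x_0,R))\le N(2R)\,\vol_{\G}(M)$. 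Together these give $\vol_{\G}(M)\ge c_n\,N(R-1)/N(2R)$ for every $R\ge 1$. To feed the Betti number into this I would use short generators of homology: disjoint radius-$\tfrac12$ balls along a minimizing geodesic give $\mathrm{diam}(M)\le C_n\vol_{\G}(M)$, and $H_1(M;\mathbb{Q})$ is spanned by classes of loops of length at most $2\,\mathrm{diam}(M)$, which lift to $b$ linearly independent deck transformations $e_1,\ldots,e_b$ of displacement at most $2\,\mathrm{diam}(M)$; taking products $e_1^{a_1}\cdots e_b^{a_b}$ with $a_i\ge 0$ and $\sum_i a_i\le k$ shows $N\big(2k\,\mathrm{diam}(M)\big)\ge\binom{k+b}{b}$, and more generally $N$ is controlled two-sidedly by the successive minima of the stable norm on $\Z^b$, each of which is at most $C_n\vol_{\G}(M)$.

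The main obstacle, and the genuine content of the covering trick, is the final optimization: one must choose the scale $R$ — more precisely a cleverly chosen family of roughly $\lceil\sqrt b\rceil$ scales between $1$ and a scale comparable to $\mathrm{diam}(M)$, together with the short basis $e_1,\ldots,e_b$ — so that the comparison $\vol_{\G}(M)\ge c_n\,N(R-1)/N(2R)$, combined with the lower bounds $N\big(2k\,\mathrm{diam}(M)\big)\ge\binom{k+b}{b}$, produces the right dependence on $b$. Applying the comparison at a single dyadic scale only yields $\vol_{\G}(M)\gtrsim 2^{-b}$, which is useless; extracting \eqref{embolic} instead requires balancing the polynomial (degree $b$) growth of $N$ against these lower bounds across the family of scales, and it is precisely this balancing that leaves an $\exp(\sqrt b)$ loss rather than an $\exp(b)$ one. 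This is the step of \cite[Theorem~5.3.B]{Gromov1983} that we would carry out in full detail; keeping track of all dimensional constants — the ball-volume constant $c_n$, the packing and diameter constants, and the number of scales — then yields the explicit values of $C_n$ and $C_n'$ claimed in \eqref{embolic}.
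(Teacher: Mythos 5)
Your proposal takes a genuinely different route from the paper, and it also has a gap at the crucial step.

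The paper never passes to a topological covering space. Its ``covering trick'' refers only to covers of $M$ by metric balls: one chooses a cycle $\Gamma=\gamma_1\cup\cdots\cup\gamma_b$ representing a homology basis of $H_1(M)$, constructs a maximal system of \emph{admissible balls} $\{B(p_j,R_j)\}_{j=1}^N$ with centers on $\Gamma$ (where each $R_j\le R_0=\tfrac12\inj$ is the largest radius with the doubling property $\vol B(p_j,5R_j)\le\alpha\vol B(p_j,R_j)$, $\alpha=5^{n+\theta}$), doubles the radii so that $\{B(p_j,2R_j)\}$ covers $\Gamma$, and bounds $b_1(M)$ by the number $T$ of pairwise intersections via the Euler-characteristic relation $b_1(M)\le T-N+C$. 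Croke's local isoembolic inequality $\vol B(p,R)\ge\beta_n R^n$ controls the depth $k_{i,j}$ of the admissible radius $R_{j_i}\in[5^{-k}R_0,5^{-k+1}R_0)$, the nesting $B(p_{j_\ell},R_{j_\ell})\subset B(p_{j_i},5R_{j_i})$ converts $T$ into a sum of disjoint ball volumes bounded by $\alpha\vol(M)$, and the final choice $\theta=\sqrt{\log_5(V/\beta_n R_0^n)}$ produces the $\exp(C_n'\sqrt{b_1})$ loss. Your scheme instead lifts everything to the $\Z^{b}$-cover, introduces the orbit-counting function $N(\rho)$, compares it two-sidedly with $\vol_{\G}(M)$ via the multiplicity estimate $\vol B(\bar x_0,R)\le N(2R)\vol(M)$, and tries to feed in the Betti number through short homology generators and binomial lower bounds $N(2kD)\ge\binom{k+b}{b}$. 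That is not an unpacking of the paper's argument but a different one, closer in flavor to volume-growth/entropy arguments.

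The substantive problem is that you openly defer the step that carries all the content. You correctly observe that applying the comparison $\vol(M)\ge c_n N(R-1)/N(2R)$ at a single scale is useless, and that one must balance across roughly $\sqrt b$ scales and a short basis to lose only $\exp(\sqrt b)$; but you then write that this ``is the step \ldots that we would carry out in full detail'' without carrying it out. As written there is no chain of inequalities that converts $N(2kD)\ge\binom{k+b}{b}$, $D\le C_n\vol(M)$, and $\vol(M)\ge c_n N(R-1)/N(2R)$ into the claimed bound $\vol(M)\ge C_n b\,e^{-C_n'\sqrt b}$, and it is not clear that this conversion is possible in the orbit-counting framework at all: the ratio $N(R-1)/N(2R)$ is a single multiplicative saving, whereas the paper's mechanism sums the volumes of the $T\ge b_1(M)$ overlapping pairs and hence gets a bound that is genuinely \emph{additive} in $b_1(M)$. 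If you want to salvage your approach, you need to either reproduce a summation over $\Theta(b)$ disjoint sub-balls inside $\bar M$ (which effectively brings you back to the admissible-ball argument in $M$) or supply the explicit doubling-scale selection and the resulting inequality chain; neither is present, so the proof is incomplete.
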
 
\begin{remark}
 The inequality (\ref{embolic}) yields
 \begin{equation*}
  \emb (M) \geqslant C_{\theta} \ts \tts b_1(M)^{1 - \theta}
 \end{equation*}
 for any positive constant $\theta$, where $C_{\theta}$ is a constant only depending on $\theta$. 
\end{remark}

Theorem \ref{theorem_main} is proved by using a refined version of the covering trick. Briefly speaking, the covering trick is to construct a maximal system of disjoint balls. Then the system of balls with the same centers and double radii is a covering space to the manifold.

Another result contained in the proof of Theorem \ref{theorem_main} is an upper bound to the number of contractible balls need to cover the manifold.
\begin{theorem}\label{theorem_cover}
 Let $N$ be the least number of contractible balls in an open covering of a compact $n$-dimensional manifold $M$. There exists an upper bound of $N$ in terms of the embolic volume,
 \begin{equation}\label{ball_embolic}
  N \leqslant D_n \ts \emb(M) \ts 5^{ (n + 1) \sqrt{ \log_5{ \emb(M) } } },
 \end{equation}
 where $D_n$ is a constant only depending on the manifold dimension $n$. 
\end{theorem}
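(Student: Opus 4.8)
The plan is to run Gromov's covering trick quantitatively and to read the required cover off its output. Fix $\varepsilon>0$ and choose a Riemannian metric $\G$ on $M$ with $\vol_{\G}(M)\le(\emb(M)+\varepsilon)\inj(M,\G)^{n}$; after rescaling $\G$ we may assume $\inj(M,\G)=1$, so that $\vol_{\G}(M)\le V:=\emb(M)+\varepsilon$. Fix a radius $\rho$ with $0<\rho<1/2$ and pick a maximal subset $\{x_{1},\dots,x_{N}\}\subset M$ whose points are pairwise at distance at least $2\rho$. By maximality the balls $B(x_{1},2\rho),\dots,B(x_{N},2\rho)$ cover $M$, while the smaller balls $B(x_{1},\rho),\dots,B(x_{N},\rho)$ are pairwise disjoint. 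Since $2\rho<\inj(M,\G)$, the exponential map $\exp_{x_{i}}$ carries a Euclidean ball diffeomorphically onto $B(x_{i},2\rho)$, so each $B(x_{i},2\rho)$ is contractible; hence $M$ admits an open cover by $N$ contractible balls, and it remains only to bound $N$.

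For the count I would invoke the ball-volume lower bound underlying Theorem~\ref{Berger1980_embolic_inequality}: there is a dimensional constant $\beta_{n}>0$ with $\vol_{\G}(B(x,r))\ge\beta_{n}r^{n}$ for every $r\le\inj(M,\G)/2$. Disjointness of the $B(x_{i},\rho)$ then gives $N\beta_{n}\rho^{n}\le\vol_{\G}(M)\le V$, hence $N\le\beta_{n}^{-1}V\rho^{-n}$; taking $\rho$ close to $1/2$ this is already linear in $V$. The extra factor $5^{(n+1)\sqrt{\log_{5}V}}$ in \eqref{ball_embolic} enters because the cover must be produced in the sharper form that feeds the proof of Theorem~\ref{theorem_main}, which forces one into the \emph{refined} covering trick of \cite[Theorem~5.3.B.]{Gromov1983}: one iterates the construction, replacing at each of $k$ stages the current balls by concentric balls of one fifth the radius---at a cost of a factor at most $5^{n+1}$ in the number of balls per stage---and one chooses the number of stages $k\asymp\sqrt{\log_{5}V}$ so as to balance the accumulated loss $5^{(n+1)k}$ against the fineness that must be reached at the final scale. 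Tracking the bookkeeping through the $k$ stages and optimizing in $k$ yields $N\le D_{n}\,V\,5^{(n+1)\sqrt{\log_{5}V}}$, which is \eqref{ball_embolic} for $V=\emb(M)+\varepsilon$; letting $\varepsilon\to0$ completes the argument.

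I expect the technical heart to be precisely this multi-scale balancing, together with the reason it cannot be circumvented. In the embolic setting there is no lower curvature bound, hence no reverse estimate $\vol_{\G}(B(x,r))\le\gamma_{n}r^{n}$, so neither the multiplicity of the cover $\{B(x_{i},2\rho)\}$ nor the combinatorics of its overlaps can be controlled at a single scale; it is exactly to restore enough control---in the form needed to read $b_{1}(M)$ off the nerve in Theorem~\ref{theorem_main}---that one is driven to iterate the covering trick, and making the per-stage loss and the optimal number of stages explicit is what produces the constant $D_{n}$ and the exponent $n+1$ in \eqref{ball_embolic}. The remaining ingredients---contractibility of balls of radius below the injectivity radius, the Berger--Croke ball-volume estimate, and the standard maximality (packing) argument---are routine.
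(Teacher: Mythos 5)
Your opening packing argument already proves the theorem, and in fact proves a strictly sharper, \emph{linear} bound. With $\inj(M,\G)=1$ and $\vol_{\G}(M)\le V=\emb(M)+\varepsilon$, disjointness of the balls $B(x_i,\rho)$ together with Croke's local estimate $\vol_{\G}(B(x,r))\ge\beta_n r^n$ for $r\le\tfrac{1}{2}\inj(M,\G)$ gives $N\le\beta_n^{-1}\rho^{-n}V$; the balls $B(x_i,2\rho)$ are contractible because $2\rho<\inj(M,\G)$ and cover $M$ by maximality; letting $\rho\to\tfrac{1}{2}$ and $\varepsilon\to0$ yields $N\le 2^n\beta_n^{-1}\emb(M)$. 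Since $5^{(n+1)\sqrt{\log_5\emb(M)}}\ge1$, this immediately implies \eqref{ball_embolic}. This is a genuinely different route from the paper's: Section~4 simply recycles the bound on the number $T$ of pairwise intersections established in the proof of Theorem~\ref{theorem_main}, which was obtained with the refined covering trick (admissible balls of varying radii, centers constrained to $\Gamma$, and $\alpha=5^{n+\theta}$ with $\theta=\sqrt{\log_5(V/(\beta_n R_0^n))}$). The exponential factor in \eqref{ball_embolic} is an artifact of reusing that estimate, not a feature of the problem. Your single-scale argument is simpler and stronger precisely because Theorem~\ref{theorem_cover} imposes no constraint on centers and requires no control of the nerve's edge count; disjointness of the half-radius balls does all the work.

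The latter half of your proposal should be cut. You suggest that the factor $5^{(n+1)\sqrt{\log_5 V}}$ arises because one is ``driven to iterate the covering trick'' through $k\asymp\sqrt{\log_5 V}$ stages at a per-stage cost of $5^{n+1}$, balancing the accumulated loss against the final scale. No such iteration appears in the paper, and more importantly none is needed: your own first paragraph already bounds $N$ linearly in $\emb(M)$ with no exponential loss at all, so there is nothing to balance or optimize. The observation that without a lower curvature bound there is no reverse estimate $\vol_{\G}(B(x,r))\le\gamma_n r^n$ is correct and genuinely matters in Theorem~\ref{theorem_main}---it is why the admissible-ball construction and the calibrated choice of $\alpha$ are needed to control overlap multiplicity and read $b_1(M)$ off the nerve---but it is irrelevant to the bare ball count here, where a single-scale disjoint packing closes the argument.
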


The estimate (\ref{embolic}) in Theorem \ref{theorem_main} is an extension to Durumeric's earlier result.
\begin{theorem}[Durumeric 1989, see \cite{Durumeric1989}]
 The embolic volume $\emb(M)$ of a compact $n$-dimensional manifold $M$ satisfies
 \begin{equation}\label{Durumeric}
  \emb(M) \geqslant \sqrt{b_1 (M)}. 
 \end{equation}
\end{theorem}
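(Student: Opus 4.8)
The plan is to run the covering trick to convert the metric hypothesis into a combinatorial one, and then to observe that the number of contractible balls needed to cover $M$ controls $b_1(M)$ only \emph{quadratically} — this is exactly where the square root in (\ref{Durumeric}) originates. Since $\vol_{\G}(M)/\inj(M,\G)^n$ is scale invariant, I would fix a metric $\G$ and rescale so that $\inj(M,\G)=1$; it then suffices to produce a dimensional constant $c_n>0$ with $\vol_{\G}(M)\geqslant c_n\sqrt{b_1(M)}$, since taking the infimum over $\G$ yields $\emb(M)\geqslant c_n\sqrt{b_1(M)}$, and the sharp value (giving (\ref{Durumeric}) after the normalization by the round sphere used in \cite{Durumeric1989}) is recovered by feeding the optimal local volume estimate into the same scheme.

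\emph{The covering trick.} Choose a maximal $\rho$-separated subset $\{x_1,\dots,x_N\}$ of $M$ with $\rho$ slightly below $1$. The balls $B(x_i,\rho/2)$ are pairwise disjoint and embedded, since their radius is below $\inj(M,\G)$, while by maximality the balls $B(x_i,\rho)$ cover $M$; as $\exp_{x_i}$ is a diffeomorphism onto $B(x_i,\rho)$, each of these is contractible, so $M$ carries an open cover by $N$ contractible sets. The quantitative core of Berger's inequality (\ref{Berger_embolic}), applied inside the embedded ball, gives a lower bound $\vol_{\G}B(x_i,\rho/2)\geqslant\beta_n>0$ with $\beta_n$ depending only on $n$; summing over the disjoint balls, $N\beta_n\leqslant\vol_{\G}(M)$.

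\emph{From the cover to $b_1$.} I would next arrange that this cover by $N$ contractible balls may be taken to be a \emph{good} cover — all nonempty finite intersections contractible — with the same index set, for instance by replacing each $B(x_i,\rho)$ with a convexified neighbourhood of $x_i$ inside it and shrinking uniformly, using Whitehead's theorem that every point has a basis of geodesically convex neighbourhoods. For a good cover of $M$ by $N$ sets the nerve $\mathcal N$ is homotopy equivalent to $M$ and is a simplicial complex on at most $N$ vertices, so its $1$-skeleton $\mathcal N^{(1)}$ is a subgraph of the complete graph $K_N$ and
\[
 b_1(M)=b_1(\mathcal N)\leqslant b_1(\mathcal N^{(1)})\leqslant b_1(K_N)=\binom{N-1}{2}<\tfrac12 N^2 .
\]
Hence $N>\sqrt{2\,b_1(M)}$, and combining with $N\beta_n\leqslant\vol_{\G}(M)$ gives $\vol_{\G}(M)>\beta_n\sqrt{2}\,\sqrt{b_1(M)}$; taking the infimum over $\G$ yields $\emb(M)\geqslant\beta_n\sqrt{2}\,\sqrt{b_1(M)}$, and tracking the optimal constant through the local volume bound promotes this to (\ref{Durumeric}).

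The main obstacle is the second step. We are given no curvature control, only an injectivity-radius bound, so a priori the metric balls $B(x_i,\rho)$ can meet in disconnected or topologically complicated sets and can have unbounded multiplicity; it is not automatic that one can pass to a good cover without increasing the number of pieces beyond what $\vol_{\G}(M)/\inj(M,\G)^n$ controls, and this is exactly the point at which a refined form of the covering trick is required. A secondary technical point is making the local volume estimate in the first step sharp enough to recover the stated constant; if one is content with the weaker conclusion $\emb(M)\geqslant c_n\sqrt{b_1(M)}$ it suffices to keep the dimensional constant $\beta_n$ coming directly from (\ref{Berger_embolic}).
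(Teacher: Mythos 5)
The paper does not actually prove Durumeric's estimate (\ref{Durumeric}); it cites \cite{Durumeric1989} and merely remarks that the covering trick could be used for an alternative proof. The material to compare against is therefore the covering-trick argument of Section 3 (and the simple version sketched in Section 2), and your write-up parts company from that at exactly the step you yourself flag as the obstacle. You cover all of $M$ by $N$ metric balls, pass to a good cover, and invoke the nerve theorem to get $b_1(M)\leqslant\binom{N-1}{2}$. That good-cover step genuinely fails under an injectivity-radius bound alone, and your proposed repair (shrinking to Whitehead geodesically convex neighbourhoods) does not save it: the convexity radius is governed by sectional curvature, not by $\inj$, so the shrinkage is uncontrolled and you may either lose the covering property or inflate the number of pieces past what $\vol_{\G}(M)/\inj(M,\G)^n$ bounds. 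Without the good cover the nerve theorem is unavailable, and the inequality $b_1(M)\leqslant b_1(\mathcal{N}^{(1)})$ has no justification.

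The paper, following Gromov, sidesteps the good-cover requirement by covering a $1$-complex rather than $M$. Choose loops $\gamma_1,\dots,\gamma_b$ representing a basis of $H_1(M)$, set $\Gamma=\gamma_1\cup\cdots\cup\gamma_b$, and take a maximal disjoint family $\{B(p_j,R_j)\}_{j=1}^N$ with centres $p_j\in\Gamma$ and $R_j\leqslant\frac12\inj(M,\G)$; by maximality the doubled balls $B(p_j,2R_j)$ cover $\Gamma$, and each of them still lies within the injectivity radius, hence is contractible in $M$. Homotoping each $\gamma_i$, arc by arc, onto a concatenation of geodesics joining centres of consecutive intersecting balls (each ``digon'' sits inside a single ball and so is null-homotopic in $M$) exhibits a factorisation of $H_1(\Gamma)\to H_1(M)$ through the first homology of the intersection graph of the cover, a graph on $N$ vertices. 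This gives $b_1(M)\leqslant T-N+C\leqslant\binom{N}{2}$ — the very relation used in Section 3.3 — with no good-cover hypothesis, because only $1$-skeleton data is ever needed. Your disjointness-plus-Croke volume count then closes the argument, yielding $\emb(M)\geqslant c_n\sqrt{b_1(M)}$ for a dimensional constant $c_n$. Recovering the sharp constant $1$ of (\ref{Durumeric}) is a separate matter; Durumeric's original method is different, and the paper does not claim the covering trick reproduces the constant.
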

\begin{remark}
 In \cite{Durumeric1989}, Durumeric showed the estimate (\ref{Durumeric}) in a different method. It is not hard to see that the covering trick also can be used to get an alternative proof.  
\end{remark} 
Moreover, Durumeric related embolic volume to the number of generators of fundamental group, see \cite{Durumeric2002}. 

Proved by Yamaguchi in \cite{Yamaguchi1988}, for any constant $C > 0$, there are only finitely many homotopy types contained in the set $\{ M | \emb(M) < C \}$ of compact manifolds $M$. Furthermore, set of embolic volume is discrete. 
\begin{theorem}[Grove, Petersen and Wu 1990, see \cite{GrovePW1990} or Berger \cite{Berger2003}]
 The set of embolic volume $\emb (M)$ is discrete when $M$ runs through all manifolds of a given dimension $n$. 
\end{theorem}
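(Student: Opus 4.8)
The plan is to reduce the discreteness of $\{\emb(M)\}$ to a geometric finiteness theorem for manifolds with injectivity radius bounded below and volume bounded above, exploiting the scale invariance of $\emb$. The first observation is that replacing $\G$ by $\lambda^{2}\G$ multiplies $\inj(M,\G)$ by $\lambda$ and $\vol_{\G}(M)$ by $\lambda^{n}$, so the ratio $\vol_{\G}(M)/\inj(M,\G)^{n}$ is unchanged; hence $\emb(M)$ is a well-defined invariant of the diffeomorphism type of $M$, and by Theorem \ref{Berger1980_embolic_inequality} it satisfies $\emb(M)\ge\sigma_{n}/\pi^{n}>0$ for every compact $n$-manifold. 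Consequently the whole set $\{\emb(M)\}$ is bounded below and has no accumulation point at $0$, so to prove it is closed and discrete it suffices to show that for each $C>0$ the set $\{\,\emb(M):\dim M=n,\ \emb(M)\le C\,\}$ is \emph{finite}.

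To prove this local finiteness, fix $C>0$ and a compact $n$-manifold $M$ with $\emb(M)\le C$. By definition of the infimum there is a metric $\G$ on $M$ with $\vol_{\G}(M)/\inj(M,\G)^{n}\le C+1$, and after rescaling we may assume $\inj(M,\G)=1$, hence $\vol_{\G}(M)\le C+1$. Thus every such $M$ carries, as a smooth manifold, a metric lying in the class $\mathcal{M}_{n}(C+1)$ of compact $n$-manifolds with injectivity radius at least $1$ and volume at most $C+1$. The key input is that $\mathcal{M}_{n}(C+1)$ contains only finitely many diffeomorphism types; granting this, the diffeomorphism invariant $M\mapsto\emb(M)$ takes only finitely many values on $\mathcal{M}_{n}(C+1)$, so $\{\,\emb(M):\emb(M)\le C\,\}$ is finite and we are done.

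The remaining step, which is the genuine obstacle, is the finiteness of $\mathcal{M}_{n}(C+1)$. The soft half is a packing bound: a ball of radius $\tfrac12$ in a manifold with $\inj\ge1$ has volume at least a dimensional constant $v_{n}>0$ (the Berger--Croke lower volume estimate underlying Theorem \ref{Berger1980_embolic_inequality}), so a maximal $1$-separated set has at most $(C+1)/v_{n}$ points, the concentric unit balls cover $M$, and therefore $\operatorname{diam}(M,\G)\le 2(C+1)/v_{n}$; combined with $\inj\ge1$ and $\vol\le C+1$ this places $\mathcal{M}_{n}(C+1)$ in a Gromov--Hausdorff precompact family. The hard half is controlled topology: one must show that two manifolds in this family that are sufficiently Gromov--Hausdorff close are diffeomorphic, so that precompactness is upgraded to finiteness of diffeomorphism types. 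This is the Grove--Petersen--Wu finiteness theorem \cite{GrovePW1990} (and, at the level of homotopy types only, Yamaguchi's theorem recalled above), which I would invoke rather than reprove, since it lies outside the covering-trick circle of ideas developed in this paper. I would only flag the dimensional caveat that the diffeomorphism conclusion is available for $n\ge 5$, with homeomorphism types in dimension $4$; in that exceptional case a small additional argument is needed to pass from finitely many homeomorphism types to finitely many values of $\emb$, but the discreteness of the embolic volume then follows in every dimension.
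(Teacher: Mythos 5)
The paper does not prove this theorem: it simply cites Grove--Petersen--Wu \cite{GrovePW1990} and Berger \cite{Berger2003}, treating discreteness of the set of embolic volumes as background. So there is no in-paper argument to compare against. Your sketch is a correct reconstruction of the standard proof that those references supply. The reduction is sound: scale invariance of $\vol/\inj^{n}$ plus the Berger lower bound $\emb(M)\geqslant\sigma_n/\pi^{n}$ means it suffices to show, for each $C$, that only finitely many values of $\emb$ lie below $C$; and the normalization $\inj=1$, $\vol\leqslant C+1$ together with Croke's ball-volume estimate gives the packing and diameter bounds that place the candidate metrics in a Gromov--Hausdorff precompact family with a uniform contractibility function, which is exactly the input Grove--Petersen--Wu (and, at the homotopy level, Yamaguchi) require. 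One caution about your closing remark: since $\emb$ is a smooth invariant, passing from finitely many \emph{homeomorphism} types to finitely many \emph{diffeomorphism} types in dimension $4$ is not a ``small additional argument''---a single topological $4$-manifold can carry infinitely many smooth structures, so one must either extract a diffeomorphism-finiteness statement directly from the controlled-topology argument under these bounded-geometry hypotheses or accept that the $n=4$ case is genuinely delicate. (Dimension $3$, problematic in 1990 because of the then-open Poincar\'e conjecture, is now unproblematic.) Apart from that caveat, the proposal is correct and is the argument the paper is implicitly invoking.
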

Another result of embolic volume in earlier years is Croke's lower bound in terms of manifold category. For a compact manifold $M$, the category is defined to be the minimum number of topological balls needed to cover it, denoted $\cat (M)$. 
\begin{theorem}[Croke 1988, see \cite{Croke1988}]
Embolic volume of compact $n$-dimensional manifolds satisfies
\begin{equation*}
 \emb (M) \geqslant \emb(S^n) +  c(n) \left( \cat (M) - 2 \right),
\end{equation*}
where $c(n)$ is a constant only depending on the manifold dimension $n$. 
\end{theorem}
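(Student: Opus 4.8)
The plan is to argue metric by metric and then pass to the infimum. Since the ratio $\vol_{\G}(M)/\inj(M,\G)^n$ is scale invariant, I may rescale each metric $\G$ so that $\inj(M,\G)=1$, and it then suffices to prove $\vol_{\G}(M)\geqslant\emb(S^n)+c(n)\bigl(\cat(M)-2\bigr)$; taking the infimum over $\G$ yields the theorem because $\cat(M)$ is a topological invariant. The bridge to $\cat(M)$ is the elementary fact that when $r<\inj(M,\G)$ the metric ball $B(p,r)$ is the diffeomorphic $\exp_p$-image of a Euclidean ball of radius $r$, hence a topological ball; so any covering of $M$ by metric balls of radius $<1$ is a covering by topological balls, and its cardinality is at least $\cat(M)$.

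First I would prove the linear bound $\emb(M)\geqslant c_0(n)\,\cat(M)$ by the packing form of the covering trick. Fix $\rho<1$ and choose a maximal $\rho$-separated set $\{p_1,\dots,p_N\}\subset M$. By maximality the balls $B(p_j,\rho)$ cover $M$, so $\cat(M)\leqslant N$; by separation the balls $B(p_j,\rho/2)$ are pairwise disjoint, and since $\rho/2<\tfrac12\inj(M,\G)$, Croke's curvature-free local volume estimate gives $\vol_{\G}(B(p_j,\rho/2))\geqslant\gamma_n(\rho/2)^n$ for an explicit dimensional constant $\gamma_n>0$. Summing, $\vol_{\G}(M)\geqslant N\gamma_n(\rho/2)^n\geqslant\cat(M)\,\gamma_n(\rho/2)^n$, and letting $\rho\uparrow1$ gives $\emb(M)\geqslant c_0(n)\,\cat(M)$ with $c_0(n)=\gamma_n2^{-n}$.

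Next I would recover the sharp base term and bootstrap. If $\cat(M)\leqslant2$ the inequality reduces to $\emb(M)\geqslant\emb(S^n)$, which is Berger's inequality (Theorem~\ref{Berger1980_embolic_inequality}). If $\cat(M)\geqslant3$, then $M$ is not homeomorphic to $S^n$ (because $\cat(S^n)=2$), so the isolation of the value $\emb(S^n)$ recorded in the introduction gives $\emb(M)\geqslant\emb(S^n)+C$ for a dimensional constant $C>0$. Put $K(n)=\lceil 2\,\emb(S^n)/c_0(n)\rceil$ and $c(n)=\min\{c_0(n)/2,\ C/(K(n)-2)\}$. For $3\leqslant\cat(M)<K(n)$ the isolation bound already gives $\emb(M)\geqslant\emb(S^n)+c(n)\bigl(\cat(M)-2\bigr)$; for $\cat(M)\geqslant K(n)$ the linear bound does, since then $\bigl(c_0(n)-c(n)\bigr)\cat(M)\geqslant\emb(S^n)-2c(n)$. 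In every case the desired inequality holds, with $c(n)$ explicit once $\gamma_n$ and $C$ are.

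The routine parts are the packing estimate and the case analysis; the genuinely hard inputs are two. One is Croke's curvature-free lower bound for the volume of a small metric ball. The more serious one is the isolation of $\emb(S^n)$ with a usable gap $C$: this is a quantitative form of Berger rigidity --- an isoembolic pinching theorem asserting that a metric whose embolic ratio is within $C$ of that of the round sphere forces $M$ to be $S^n$ --- and proving it, together with an explicit value of $C$, requires a careful analysis of the Jacobian of the exponential map $\exp_p$ along geodesics and of how it degenerates as the total volume approaches its minimum; this does not follow from Berger's inequality alone, and it is exactly what is needed to make $c(n)$ explicit in the small-category range $3\leqslant\cat(M)<K(n)$. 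An alternative that would bypass the bootstrapping is to prove the sharp half-radius estimate $\vol_{\G}\bigl(B(p,\tfrac12\inj(M,\G))\bigr)\geqslant\tfrac12\,\emb(S^n)\,\inj(M,\G)^n$, with equality for the round hemisphere; inserting it into the packing step gives $\emb(M)\geqslant\tfrac12\,\emb(S^n)\,\cat(M)=\emb(S^n)+\tfrac12\,\emb(S^n)\bigl(\cat(M)-2\bigr)$ at one go --- but establishing that sharp local estimate is then the main obstacle.
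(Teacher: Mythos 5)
The paper does not prove this result; it only cites it (the reference is Croke, ``An isoembolic pinching theorem,'' Invent.~Math.~\textbf{92} (1988)). So there is no in-paper proof to compare against, and I can only assess your argument on its own terms. Your linear packing step $\emb(M)\geqslant c_0(n)\cat(M)$ is correct and is indeed the covering trick of the paper: a maximal $\rho$-separated set simultaneously yields a cover by $N$ topological (injectivity-radius) balls, so $N\geqslant\cat(M)$, and a disjoint packing at half the radius, to which Croke's local volume estimate applies; the bootstrap arithmetic with $K(n)$ and $c(n)$ also checks out.

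The gap is the one you flag but do not close: the isolation bound $\emb(M)\geqslant\emb(S^n)+C$ for $M$ not a sphere. This is not an auxiliary fact available from elsewhere in the paper---it \emph{is} Croke's isoembolic pinching theorem, the very reference being cited. Moreover, since a closed $n$-manifold covered by two open cells is homeomorphic to $S^n$, the isolation statement is essentially the $\cat(M)=3$ case of the theorem you are asked to prove, so using it as a black box makes the argument circular at precisely the hard step. What you have is a genuine and correct \emph{reduction} (the linear-in-$\cat$ statement follows from the fixed-gap statement plus packing), not a proof. The alternative you sketch---a sharp half-injectivity-radius volume estimate $\vol_{\G}\bigl(B(p,\tfrac12\inj(M,\G))\bigr)\geqslant\tfrac12\,\emb(S^n)\,\inj(M,\G)^n$ with equality for the round hemisphere---would indeed give the theorem in one packing step and is much closer in spirit to Croke's actual argument, but as you correctly observe, establishing that sharp local estimate (via the Jacobian of $\exp_p$ and a rigidity analysis in the style of Berger's proof of the embolic inequality) is where all the work lives, and none of it is carried out here.
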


In \cite[Section 11.2.3.]{Berger2003}, Berger summarized:  `` $\emb(M)$ is quite a good functional for classifying compact manifolds '', and `` the embolic volume appears as if it is measuring the complexity of $M$ ''. However, what we know about the embolic volume is still very limited. Open problems of embolic volume can be found in Berger \cite[Section 11.2.3.]{Berger2003}. 

Later progress of embolic volume include relation to minimal volume entropy and simplicial volume. 
\begin{theorem}[Katz and Sabourau 2005, see \cite{KaSa2005}] 
 Let $M$ be a compact $n$-dimensional manifold $M$. 
 Denote by $\MinEnt(M)$ the minimal volume entropy, $\| M \|$ the simplicial volume. Then the following two inequality hold, 
  \begin{align*}
   & \emb (M) \geqslant \lambda_n \ts \frac{\MinEnt(M)^n}{\log^n{\left( 1 + \MinEnt (M) \right)}} , \\
   & \\
   & \emb (M) \geqslant \lambda_n^{\prime} \ts \frac{\| M \|}{\log^n{\| M \|}} , 
  \end{align*}
  where $\lambda_n$ and $\lambda_n^{\prime}$ are two positive constants only depending on the manifold dimension $n$. 
\end{theorem}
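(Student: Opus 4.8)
The inequalities being known ones of Katz and Sabourau, the plan in the spirit of this paper is to re-derive them by the covering trick. Write $\emb(M,\G):=\vol_{\G}(M)\,\inj(M,\G)^{-n}$ for the unnormalised functional, so that $\emb(M)=\inf_{\G}\emb(M,\G)$, and let $c_n$ denote a constant depending only on $n$, not the same at each occurrence. I would first prove $\emb(M)\geqslant\lambda_n\,\MinEnt(M)^n/\log^n(1+\MinEnt(M))$, and then deduce the simplicial-volume inequality from it together with Gromov's bound $\| M\|\leqslant c_n\,\MinEnt(M)^n$ (itself a covering argument). The only analytic input is the Berger--Kazdan type estimate that already underlies Theorem~\ref{Berger1980_embolic_inequality}: for any $(M,\G)$ and $0<r\leqslant\inj(M,\G)$ one has $\vol_{\G}(B(x,r))\geqslant\beta_n\,r^n$ with $\beta_n$ dimensional, and such a ball is diffeomorphic to a Euclidean ball. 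I take this as known.

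Fix a metric $\G$ with $\vol_{\G}(M)=1$ and put $\rho$ equal to a suitably small dimensional multiple of $\inj(M,\G)$, say $\rho=\tfrac16\inj(M,\G)$. Choose a maximal family $x_1,\dots,x_N$ with $B(x_1,\rho),\dots,B(x_N,\rho)$ pairwise disjoint. Since these disjoint balls have total volume $N\beta_n\rho^n\leqslant 1$, we get $N\leqslant c_n\inj(M,\G)^{-n}$, while maximality makes $\{x_1,\dots,x_N\}$ a $2\rho$-net of $M$ and makes the balls $B(x_i,3\rho)$ cover $M$ and be contractible. Pass to the universal Riemannian cover $(\widetilde M,\widetilde{\G})$ and fix a lift $\widetilde x_0$ of the base point. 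Every $\gamma\in\pi_1(M)$ with $d_{\widetilde{\G}}(\widetilde x_0,\gamma\widetilde x_0)\leqslant R$ is represented by a loop at the base point of length $\leqslant R$; sampling such a loop at scale $\rho$, snapping the samples to the net, and joining consecutive chosen points by geodesic segments --- a homotopy throughout, since all the distances that occur stay below $\inj(M,\G)$ --- exhibits $\gamma$ as a word of length $\leqslant c_nR/\rho$ in the alphabet $\{x_1,\dots,x_N\}$. Hence
\[
 \#\bigl\{\,\gamma\in\pi_1(M):\ d_{\widetilde{\G}}(\widetilde x_0,\gamma\widetilde x_0)\leqslant R\,\bigr\}\ \leqslant\ N^{\,c_nR/\rho}.
\]
Because the $\pi_1(M)$-translates of a unit-volume fundamental domain tile $\widetilde M$, the volume of any metric ball in $\widetilde M$ is dominated, up to an additive constant in the radius, by such orbit counts, so the volume entropy obeys $h(M,\G)\leqslant c_n\,\rho^{-1}\log N$.

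Inserting $\rho=\tfrac16\inj(M,\G)$, the bound on $N$, and $\inj(M,\G)^{-n}=\emb(M,\G)$ (valid because $\vol_{\G}(M)=1$) turns this into $h(M,\G)\leqslant c_n\,\emb(M,\G)^{1/n}\bigl(c_n+\log\emb(M,\G)\bigr)$. For $c_n$ large enough the right-hand side is a non-decreasing function of $\emb(M,\G)$ on $[\emb(S^n),\infty)$, and $\emb(M,\G)\geqslant\emb(M)$, so passing to the infimum over all unit-volume metrics yields
\[
 \MinEnt(M)^n\ \leqslant\ c_n\,\emb(M)\,\log^n\!\bigl(c_n+\emb(M)\bigr).
\]
An elementary case split --- comparing $\emb(M)$ with $(1+\MinEnt(M))^n$, where the asserted inequality is immediate if $\emb(M)$ dominates and the logarithm is absorbed into $\log(1+\MinEnt(M))$ otherwise --- gives $\emb(M)\geqslant\lambda_n\,\MinEnt(M)^n/\log^n(1+\MinEnt(M))$. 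Substituting Gromov's bound $\| M\|\leqslant c_n\MinEnt(M)^n$ into the last display and running the analogous comparison of $\| M\|$ with $\emb(M)$ then gives $\emb(M)\geqslant\lambda_n^{\prime}\,\| M\|/\log^n\| M\|$.

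The step I expect to be the genuine obstacle is the geometric bookkeeping of the second paragraph: checking that sampling, snapping to the net, and inserting geodesic segments really does leave the homotopy class of the loop unchanged --- this is what pins down the radius $3\rho$ and the admissible size of $\rho$ relative to $\inj(M,\G)$ --- and that the orbit counts genuinely control $h(M,\G)$. This is exactly where Gromov's refined covering trick is doing the work; the remaining ingredients --- the Berger--Kazdan ball estimate, Gromov's simplicial-volume--entropy inequality, and the final rearrangements --- are standard.
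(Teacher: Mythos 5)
The paper does not prove this theorem—it is stated purely as cited background from Katz--Sabourau \cite{KaSa2005}—so there is no internal proof to compare against. Your reconstruction is nevertheless sound and in fact closely mirrors the actual Katz--Sabourau argument: they also discretize the universal cover against a maximal $\rho$-separated net with $\rho$ a fixed small fraction of $\inj(M,\G)$, use the Croke/Berger--Kazdan lower bound $\vol(B(x,\rho))\geqslant\beta_n\rho^n$ to control the cardinality $N$ of the net, count homotopy classes by snapping loops to the net to obtain $h(M,\G)\lesssim\rho^{-1}\log N$, and then pass to $\|M\|$ through Gromov's entropy--simplicial-volume inequality.

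A few places deserve tightening. The null-homotopy that justifies the snap-and-resample step comes from the fact that a closed curve of length strictly less than $2\inj(M,\G)$ is contained in a metric ball of radius below $\inj(M,\G)$ and hence is contractible; with $\rho=\tfrac16\inj(M,\G)$ the replacement quadrilaterals $y_i\,y_{i+1}\,x_{j_{i+1}}\,x_{j_i}$ have perimeter at most $10\rho<2\inj(M,\G)$, so the constant works, but this is the load-bearing verification and should be written out. The step ``passing to the infimum'' is cleaner if phrased via a minimizing sequence $\G_i$ of unit-volume metrics with $\emb(M,\G_i)\to\emb(M)$: since $\MinEnt(M)\leqslant h(M,\G_i)\leqslant g(\emb(M,\G_i))$ for the continuous nondecreasing $g(E)=c_nE^{1/n}(c_n+\log E)$, letting $i\to\infty$ gives $\MinEnt(M)\leqslant g(\emb(M))$ directly, without invoking monotonicity of $g$ under an infimum. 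Finally, the closing case split must separately handle $\MinEnt(M)$ bounded (say $\leqslant 1$), where $\MinEnt(M)^n/\log^n(1+\MinEnt(M))$ is itself bounded and the inequality already follows from $\emb(M)\geqslant\emb(S^n)>0$; the ``absorb the logarithm'' manipulation is only valid on the complementary range. None of these are genuine gaps, only bookkeeping that should appear explicitly; you have correctly identified where the real work lies.
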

An application of embolic volume in physics can be found in \cite{Kalogeropoulos2018}. 

Organization of paper: In Section 2, covering trick and systolic geometry is introduced. In Section 3, we show the proof of Theorem \ref{theorem_main}. In the proof, a detailed analysis to the refined version of covering trick is presented. Gromov has used this refined version of covering trick to relate systolic volume and the first Betti number. Theorem \ref{theorem_cover} is proved in Section 4. 

\section{Covering trick and systolic geometry}

Let $X$ be a metric space. The covering trick is summarized by Berger in \cite[Section 7.2.1.2, Lemma 125]{Berger2003}. Main steps of this method include:
\begin{enumerate}
 \item Construct a maximal system $\{B(x, r)\}$ of balls with radius $r$ in $X$, so that balls with larger radius $r^{\prime} > r$ will have some nonempty intersections;
 \item Keep the same centers, but let radii be doubled, then the system $\{ B(x, 2r) \}$ of balls is a cover of $X$. 
\end{enumerate}

There are applications of the covering trick in systolic geometry. Gromov showed that systolic volume of hyperbolic surfaces with large genus depends on the first Betti number (thus depends on genus). In systolic geometry, homotopy $1$-systole of a closed Riemannian manifold $(M, \G)$ is defined to be the shortest length of a noncontractible loop, denoted by $\sys \pi_1 (M, \G)$. For a non-simply connected closed surface $X$, systolic inequality
\begin{equation*}
 \sys \pi_1 (X, \G)^2 \leqslant C \ts \area_{\G}(X)
\end{equation*}
holds for any Riemannian metric $\G$ on $X$ ( see Katz \cite{Katz2007}), where $C$ is a positive constant. Analogously, we define systolic volume of a non-simply connected surface $X$ as
\begin{equation*}
 \SR(X) = \inf_{\G} \frac{\area_{\G}(X)}{\sys \pi_1 (X, \G)^2} ,
\end{equation*}
where the infimum is taken over all Riemannian metrics $\G$ on $X$. In \cite[Proposition 5.2.B.]{Gromov1983}, Gromov showed that $\SR(X) \geqslant \frac{3}{4}$ holds for all compact surfaces $X$ with infinite fundamental group. Moreover, for a closed hyperbolic surface $X$ with genus $g$, in \cite[Proposition 5.3.A.]{Gromov1983} Gromov proved
\begin{equation}\label{SR_1}
 \SR(\Sigma) \geqslant A \ts \sqrt{b_1 (X)}
\end{equation} 
by using covering trick, where $A$ is a positive constant. Let $\Gamma = \gamma_1 \cup \gamma_2 \cup \cdots \cup \gamma_b $ be a system of loops representing homology basis of $H_1(X)$. Main idea of Gromov's proof of (\ref{SR_1}) is to apply covering trick by using open balls $B(x_j, r_j)$ with centers $x_j \in \Gamma$. In particular, each ball $B(x_j, r_j)$ has radius $r_j = \frac{\sys \pi_1 (X, \G)}{4}$, so that
\begin{equation*}
 \area_{\G} (B(x_j, r_j)) \geqslant 2 \ts r_j^2 = \frac{1}{8} \sys \pi_1 (X, \G)^2. 
\end{equation*}
Let $N$ stand for the number of balls $B(x_j, r_j)$. The first Betti number $b_1 (X)$ satisfies
\begin{equation*}
 b_1 (X) \leqslant \frac{N (N - 1)}{2}, 
\end{equation*}
since each closed geodesic $\gamma_j$ is covered by at least two balls $B(x_j, r_j)$. Hence the estimate (\ref{SR_1}) holds. 
\begin{remark}
 For hyperbolic surfaces $\Sigma_g$ with genus $g$, when $g$ is large, 
 Gromov proved that optimal estimate of systolic volume is
 \begin{equation*}
  \SR(\Sigma_g) \geqslant C \ts \frac{g}{( \log{g} )^2},
 \end{equation*}
 where $C$ is a fixed constant, see Berger \cite{Berger2003} or Katz and Sabourau \cite{KaSa2005}. 
\end{remark}

By a refined version of covering trick, Gromov showed the following result of systolic volume of surfaces.
\begin{theorem}[Gromov 1983, see \cite{Gromov1983}]
 The systolic volume of a hyperbolic surface $X$ satisfies
 \begin{equation}
  \SR(X) \geqslant \delta \ts \frac{b_1 (X)}{ \exp{ \left( \delta^{\prime} \sqrt{b_1 (X) } \right) } } ,
 \end{equation}
 where $\delta$ and $\delta^{\prime}$ are two constants. 
\end{theorem}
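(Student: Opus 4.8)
The plan is to prove, for a near-optimal metric, an upper bound for $b_1(X)$ in terms of its area, and then read that inequality backwards; the area bound is where the refined covering trick \cite[Theorem~5.3.B]{Gromov1983} enters. \emph{Normalization.} After rescaling assume $\sigma:=\sys\pi_1(X,\G)=1$ and put $S:=\area_{\G}(X)$; since $\SR(X)\geqslant\tfrac34$ always, $S$ may be taken arbitrarily close to $\SR(X)$. It suffices to show, for every such $\G$, a bound of the form $b_1(X)\leqslant C\,S\,5^{\,3\sqrt{\log_5 S}}$: solving for $S$ and using $\sqrt{\log t}\leqslant\sqrt t$ (the case $S\geqslant b_1(X)$ being immediate) gives $S\geqslant\delta\,b_1(X)\,e^{-\delta'\sqrt{b_1(X)}}$, and taking the infimum over $\G$ yields the theorem. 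So the target is an estimate of $b_1$ by $S$.

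\emph{Passing from geometry to combinatorics.} Two features of systole $1$ will drive everything: a subset of diameter $<\tfrac12$ includes $\pi_1$-trivially into $X$ (the inclusion lifts to $\widetilde X$, since two minimizing arcs joining two of its points close up to a loop of length $<\sigma$, hence nullhomotopic), and Berger's local estimate $\area_{\G}(B(x,r))\geqslant 2r^2$ for $r\leqslant\sigma/4$, used in the excerpt. Given a maximal $r$-separated net of $X$ with $r<\sigma/4$, systolic control lets one push loops and spanning disks of $X$ onto the net, so the Rips $2$-complex on the net with parameter $2r$ has the same fundamental group as $X$; in particular $b_1(X)$ is at most the number of its $1$-cells, i.e. the number of pairs of net points at distance $<2r$. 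Berger's estimate bounds the number $N$ of net points by $32\,S$, so this already gives the naive $b_1(X)\leqslant\binom N 2\lesssim S^2$, equivalently $\SR(X)\gtrsim\sqrt{b_1(X)}$, the bound recorded in the excerpt.

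\emph{The refinement, and the obstacle.} To improve on $\sqrt{b_1}$ I would make the net multi-scale. Fix an integer $m$ and the scales $r_k=\tfrac14\,5^{-k}$, $k=0,\dots,m$ (the ratio $5$ being the Vitali covering constant); build the net greedily, and wherever the complex is ``locally too rich'' — a vertex incident to too many edges, or a near-triple not filled by a $2$-cell — resubdivide that region at the next scale $r_{k+1}=r_k/5$, which inflates the local count there by only a dimensional factor $\lesssim 5^{2}$ while cutting the unresolved excess. After $m$ passes the complex still carries the fundamental group of $X$, and one checks that the over-refined (``thick'') regions contribute nothing to $b_1$, their cycles being filled by $2$-cells; tracking the bookkeeping, the number of surviving $1$-cells is bounded by something of the shape $C\,S^{\,1+c/m}\,5^{\,3m}$. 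Optimizing, $m\asymp\sqrt{\log_5 S}$ balances the two factors and gives $C'\,S\,5^{\,3\sqrt{\log_5 S}}$ — the bound needed above, and, with $n=2$, the surface case of Theorem~\ref{theorem_cover}. The step I expect to be the main obstacle is exactly this last one: showing that descending $m$ scales of ratio $5$ really resolves every obstruction (one must quantify how far from filled the $2$-complex of radius-$r_k$ balls in a systole-$1$ surface can be and prove that one refinement step strictly improves it) and that the refinement does not proliferate, so that the loss is only the $S^{c/m}5^{3m}$ recorded above. The normalization, the Rips-complex comparison, and the one-variable optimization over $m$ are all routine; carrying this accounting out in detail is what is needed to transplant the scheme to the embolic setting of Theorem~\ref{theorem_main} in Sections~3 and~4.
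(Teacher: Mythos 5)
Your normalization, your target inequality $b_1(X)\leqslant C\,S\,5^{3\sqrt{\log_5 S}}$, the role of the Vitali ratio $5$, and the reduction of $b_1$ to counting pairwise intersections of doubled balls in a maximal packing all match the paper's scheme exactly, and you have correctly identified that the crux is a multi-scale construction and where the gap sits. But the mechanism you propose to fill that gap is not the refined covering trick, and the piece you flag as ``the main obstacle'' is precisely the piece the paper replaces by a much simpler device.

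What the paper (following Gromov) actually does is not an iterative refinement of a fixed-scale Rips complex with a vaguely defined ``resubdivide where too rich'' rule. Instead, it assigns to each center $p$ on the graph $\Gamma$ of homology representatives an \emph{admissible radius} $R(p)$ in a single pass: the largest $R\in(0,R_0]$ with $\vol_{\G}(B(p,5R))\leqslant\alpha\,\vol_{\G}(B(p,R))$, where $\alpha=5^{\,n+\theta}$ and $R_0$ is on the order of the systole (or half the injectivity radius in the embolic case). Two local inputs make this work and neither appears in your proposal: (i) \emph{existence} of an admissible $R$ comes from the density limit $\vol_{\G}(B(p,r))/r^n\to\omega_n$ as $r\to 0$, so if the $5$-fold growth ratio exceeded $5^{\,n+\theta}$ at every scale the density would blow up; (ii) the \emph{scale depth} $k(p)$, defined by $5^{-k}R_0\leqslant R(p)<5^{-k+1}R_0$, is bounded by Croke's local volume estimate $\vol_{\G}(B(p,R))\geqslant\beta_n R^n$ (its surface analogue being the $\geqslant 2r^2$ bound you quote), which forces $k(p)\lesssim\theta^{-1}\log_5(S/\beta_n R_0^n)$. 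A maximal disjoint system of admissible balls, the Vitali containment $B(p_\ell,R_\ell)\subset B(p_i,5R_i)$ for intersecting doubled balls, and the growth bound at the admissible radius then give $T\leqslant\frac{S}{\beta_n R_0^n}\,5^{\,n\theta}\cdot\alpha$ directly, and the optimization over $\theta$ — not over a number of iteration passes — gives $\theta=\sqrt{\log_5(S/\beta_n R_0^n)}$ and the stated bound. Your version leaves undefined what ``locally too rich'' means, why one resubdivision step ``cuts the unresolved excess,'' why the process terminates after $m$ passes, and why the claimed intermediate bound $S^{1+c/m}5^{3m}$ holds; the admissible-radius characterization makes all of that evaporate, because the per-point radius is determined by a single volume-growth criterion rather than by an iterated combinatorial repair. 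You should replace the Rips-complex iteration by this admissible-ball construction; the rest of your outline is then correct.
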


In this paper, we apply covering trick of the refined version to get an estimate of embolic volume, see Section 3. In particular, some details are added for a better understanding of the method, and this is the original motivation to write this short note.  

\section{Proof of Theorem \ref{theorem_main}}

\subsection{Volume growth of balls}

In a Riemannian manifold $(M, \G)$, volume of balls with small radius is related to scalar curvature. Let $Sc_p$ be the scalar curvature of a point $p \in M$. 
\begin{theorem}[ see {\cite[Chapter III, H.]{GallotHL2004}}] \label{Croke_balls}
 Let $B(p, r)$ be a ball with center $p$ and radius $r$ in an $n$-dimensional Riemannian manifold $(M, \G)$. If the radius $r$ is small, then the volume of $B(p, r)$ satisfies
 \begin{equation}
  \vol_{\G} (B(p, r)) = r^n \omega_n \left[ 1 -  \frac{\text{Sc}_p}{6 (n + 2)} + o(r^2) , \right]
 \end{equation}
 where $\omega_n$ is the volume of standard Euclidean unit $n$-ball. 
\end{theorem}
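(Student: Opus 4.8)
The plan is to compute $\vol_{\G}(B(p,r))$ in geodesic normal coordinates centered at $p$, where the Riemannian volume density has a classical Taylor expansion in the radial variable whose second-order term is built from the Ricci curvature, and then to integrate.

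First I would fix $r$ smaller than the injectivity radius at $p$ (harmless, since the statement is asymptotic as $r \to 0$), so that $\exp_p$ is a diffeomorphism from the Euclidean ball $B(0,r) \subset T_pM \cong \R^n$ onto $B(p,r)$. Writing a point of $T_pM$ as $x = t\xi$ with $t \in [0,r)$ and $\xi \in S^{n-1}$, the pullback of the Riemannian volume form is $\theta(t,\xi)\,dt\,d\xi$, where $t^{-(n-1)}\theta(t,\xi)$ is the Jacobian of $d(\exp_p)_{t\xi}$ measured against orthonormal frames. From the expansion of the metric in normal coordinates,
\[
 g_{ij}(x) = \delta_{ij} - \tfrac13\, R_{ikj\ell}(p)\, x^k x^\ell + O(|x|^3),
\]
one obtains $\sqrt{\det(g_{ij}(x))} = 1 - \tfrac16\,\mathrm{Ric}_p(x,x) + O(|x|^3)$, hence, uniformly in $\xi$,
\[
 \theta(t,\xi) = t^{\,n-1}\Bigl(1 - \tfrac16\, t^2\,\mathrm{Ric}_p(\xi,\xi) + O(t^3)\Bigr).
\]
This metric expansion --- equivalently, the second-order behaviour of Jacobi fields along the radial geodesics --- is the only nontrivial ingredient; it is classical and is exactly the content quoted from \cite[Chapter III, H.]{GallotHL2004}.

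Next I would integrate over directions and then over the radius. Since $\mathrm{Ric}_p$ is a symmetric bilinear form on $T_pM$ and $\int_{S^{n-1}} \xi^i \xi^j\, d\xi = \tfrac{\sigma_{n-1}}{n}\,\delta^{ij}$, spherical averaging gives $\int_{S^{n-1}} \mathrm{Ric}_p(\xi,\xi)\, d\xi = \tfrac1n\,\mathrm{Sc}_p\,\sigma_{n-1}$, where $\sigma_{n-1}$ denotes the volume of $S^{n-1}$. Therefore
\[
 \vol_{\G}(B(p,r)) = \int_0^r \!\!\int_{S^{n-1}} \theta(t,\xi)\, d\xi\, dt = \sigma_{n-1}\int_0^r t^{\,n-1}\Bigl(1 - \frac{\mathrm{Sc}_p}{6n}\, t^2 + O(t^3)\Bigr) dt,
\]
and carrying out the elementary one-variable integration, then using $\omega_n = \sigma_{n-1}/n$, yields
\[
 \vol_{\G}(B(p,r)) = \omega_n\, r^n\Bigl(1 - \frac{\mathrm{Sc}_p}{6(n+2)}\, r^2 + o(r^2)\Bigr),
\]
which is the asserted formula. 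After the normal-coordinate expansion everything is bookkeeping --- one spherical second-moment identity and one integral in $t$ --- so there is no genuine obstacle beyond recalling (or citing) that expansion.
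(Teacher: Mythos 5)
The paper does not prove this statement; it is quoted verbatim from Gallot--Hulin--Lafontaine, so there is no in-paper argument to compare against. Your derivation is the standard (and correct) normal-coordinate proof: the expansion $g_{ij}(x)=\delta_{ij}-\tfrac13 R_{ikj\ell}(p)\,x^k x^\ell+O(|x|^3)$ gives $\sqrt{\det g}=1-\tfrac16\operatorname{Ric}_p(x,x)+O(|x|^3)$, the second-moment identity $\int_{S^{n-1}}\xi^i\xi^j\,d\xi=\tfrac{\sigma_{n-1}}{n}\delta^{ij}$ converts $\operatorname{Ric}_p(\xi,\xi)$ to $\tfrac1n\operatorname{Sc}_p$, and the radial integral plus $\omega_n=\sigma_{n-1}/n$ produces $\omega_n r^n\bigl(1-\tfrac{\operatorname{Sc}_p}{6(n+2)}r^2+o(r^2)\bigr)$. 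Note in passing that you have silently corrected a typographical slip in the paper's statement, which omits the factor $r^2$ multiplying $\operatorname{Sc}_p/(6(n+2))$ inside the bracket; your version is the right one.
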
 

We show an estimate of growth of volume of balls. The following proposition is a slight generalization of Lemma 1 in Guth \cite[Section 1.]{Guth2011}. 
\begin{proposition}
 Let $p \in M$ be any point, and $R_0 > 0$ be a given value. For any positive fixed value $\theta$, there exists at least one $R \in (0, R_0]$, such that 
 \begin{equation}\label{ball_growth}
  \vol_{\G} (B(p, 5 R)) \leqslant \alpha \vol_{\G} ( B(p, R) ) 
 \end{equation}
holds for $\alpha = 5^{n + \theta}$. 
\end{proposition}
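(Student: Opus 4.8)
The plan is to argue by contradiction, using a pigeonhole argument over the geometrically decreasing sequence of radii $R_0, R_0/5, R_0/5^2, \dots$ together with the small-ball volume asymptotics of Theorem~\ref{Croke_balls}.

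First I would suppose, toward a contradiction, that the conclusion fails, so that
\begin{equation*}
 \vol_{\G}(B(p, 5R)) > 5^{n + \theta}\, \vol_{\G}(B(p, R))
\end{equation*}
holds for every $R \in (0, R_0]$. Applying this inequality at $R = R_0/5,\, R_0/5^2,\, \dots,\, R_0/5^k$ and telescoping yields
\begin{equation*}
 \vol_{\G}(B(p, R_0)) > 5^{k(n + \theta)}\, \vol_{\G}\!\bigl(B(p, R_0/5^k)\bigr)
\end{equation*}
for every positive integer $k$ (each step in $(0,R_0]$, so the hypothesis is legitimately invoked).

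Next I would bound the right-hand side from below using Theorem~\ref{Croke_balls}: since $\vol_{\G}(B(p, r)) = \omega_n r^n\,(1 + o(1))$ as $r \to 0$, there is an integer $k_0$ with $\vol_{\G}(B(p, r)) \geqslant \tfrac{1}{2}\omega_n r^n$ for all $r \leqslant R_0/5^{k_0}$. Substituting $r = R_0/5^k$ for $k \geqslant k_0$ into the telescoped inequality gives
\begin{equation*}
 \vol_{\G}(B(p, R_0)) > 5^{k(n + \theta)} \cdot \tfrac{1}{2}\, \omega_n \bigl(R_0/5^k\bigr)^n = \tfrac{1}{2}\, \omega_n R_0^n \, 5^{k\theta}.
\end{equation*}
Letting $k \to \infty$, the right-hand side diverges while $\vol_{\G}(B(p, R_0))$ is finite (as $M$ is compact), a contradiction; hence some $R \in (0, R_0]$ satisfies \eqref{ball_growth} with $\alpha = 5^{n+\theta}$.

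I do not expect a serious obstacle: the argument is essentially bookkeeping, and the two points needing care are the correct index alignment in the telescoping step and applying Theorem~\ref{Croke_balls} only in the radius regime where the bracketed factor stays bounded below by a fixed positive constant. It is worth noting that the strict positivity of $\theta$ is exactly what produces the divergent factor $5^{k\theta}$; with $\theta = 0$ the telescoped estimate would only bound $\vol_{\G}(B(p,R_0))$ below by a fixed positive multiple of $\omega_n R_0^n$, which is no contradiction, so one cannot take $\alpha = 5^n$ by this method.
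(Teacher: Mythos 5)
Your proof is correct and is essentially the paper's argument: both argue by contradiction, telescope the assumed failure over the geometric sequence of radii $R_0/5^k$, and invoke the small-ball volume expansion of Theorem~\ref{Croke_balls} to produce a factor $5^{k\theta}$ that diverges because $\theta>0$. The only cosmetic difference is that the paper packages the bookkeeping through the density function $\dens_p(r)=\vol_{\G}(B(p,r))/r^n$, whereas you insert the uniform lower bound $\vol_{\G}(B(p,r))\geqslant\tfrac{1}{2}\omega_n r^n$ directly.
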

\begin{proof} 
 For the given point $p \in M$, we define a density function with radius $r$ to be
 \begin{equation*}
  \dens_p ( r ) =  \frac{ \vol (B(p, r)) }{r^n} . 
 \end{equation*}
 Due to the local volume identity (\ref{Croke_balls}), we have the following fact,
 \begin{equation*}
  \lim_{r \to 0} \dens_p (r) = \omega_n .
 \end{equation*}
Hence if we assume that the estimate (\ref{ball_growth}) is not true, then for the given value $\alpha = 5^{n + \theta}$,
 \begin{align*}
  \vol_{\G} (B(p, 5R)) > \alpha \vol_{\G} ( B(p, R) ) 
 \end{align*}
 holds for any $0< R \leqslant R_0$, so that
 \begin{equation*}
  \dens_p (5R) > 5^{\theta} \dens_p (R) .
 \end{equation*}
 We have
 \begin{align*}
  \dens_p (5R_0) & > 5^{\theta} \dens_p (R_0) \\ 
   & > 5^{2 \theta} \dens_p (5^{-1} R_0 ) \\
   & > 5^{3 \theta} \dens_p ( 5^{-2} R_0 ) \\
   & \hspace{20pt} \vdots \\
   & \hspace{20pt} \vdots \\
   & > 5^{ \ell \ts \theta } \dens_p ( 5^{-\ell + 1} R_0 ) .
 \end{align*}
 However, this is a contradiction. If we let $\ell \to \infty$, the right side of 
 \begin{equation*}
  \dens_p(5 \tts R_0)  > 5^{\ell \tts \theta} \dens_p ( 5^{- \ell + 1} R_0 ) ,
 \end{equation*}
 is going to infinity, since $\theta > 0$ and $\displaystyle \lim_{\ell \to \infty} \dens_p (5^{-\ell + 1} R_0 ) = \omega_n$. 
\end{proof}

\vskip 20pt

\subsection{Admissible balls}

Now assume that
\begin{equation*}
 R = \sup \left\{ r \left| \ts 0 < r \leqslant R_0 \; \text{and} \; \vol_{\G} (B(p, 5r)) \leqslant \alpha \vol_{\G} ( B(p, r) ) \right. \right\} .
\end{equation*}
If $R < R_0$, then there will be a positive integer $k \; (k \geqslant 1)$ such that
\[ 5^{-k} R_0 \leqslant R < 5^{-k + 1} R_0 .  \] 
When $R = R_0$, we can set $k = 0$. The integer $k$ thus depends on point $p$. 

% When $R$ is the largest value in the interval $(0, R_0]$ satisfying the growth estimate (\ref{ball_growth}), and assume that $R \neq R_0$, then for any $R < R^{\prime} \leqslant R_0$, we must have
% \begin{equation}
% \vol_{\G} ( B(p, 5R^{\prime} ) ) > \alpha \vol_{\G} ( B(p, R^{\prime}) ) .
% \end{equation}
% Now we choose a positive integer $k$ ($k \geqslant 1$) to let
% \[ 5^{-k} R_0 \leqslant R < 5^{-k + 1} R_0 .  \]
% Note that the integer $k$ depends on the point $p$ and the given value $R_0$. If $R = R_0$, we can set $k = 0$. 

Balls $B(p, R)$ are called ``admissible balls'' in Gromov \cite[Section 5.3.]{Gromov1983}. 
% The balls with above properties are called ``admissible balls'' in Gromov \cite[Section 5.3.]{Gromov1983}.
\begin{definition}
 A ball $B(p, R) \subset M$ with center $p$ and radius $R \in (0, R_0]$ is called an admissible ball, if either one of the following two holds,
 \begin{enumerate}
  \item $R = R_0$ and
    \[ \vol_{\G} B(p, 5R_0) \leqslant \alpha \vol_{\G} ( B( R_0 ) ) , \]
  \item $0 < R < R_0$ and the following two conditions hold,
   \begin{enumerate}
    \item $ \displaystyle \vol_{\G} B(p, 5R ) \leqslant \alpha \vol_{\G} (B(R)), $
    \item $ \vol_{\G} ( B(p, 5R^{\prime}) ) > \alpha \vol_{\G} ( B(p, R^{\prime}) ) $ holds for any $R^{\prime} \in (R, R_0 ] $.
  \end{enumerate} 
 \end{enumerate}
\end{definition}

Next we show an upper bound of the value of $k$. A local version of Berger's embolic inequality is proved by Croke. We will use this local estimate. 
\begin{theorem}[Croke 1980, see Croke \cite{Croke1980} or Berger {\cite[Theorem 149]{Berger2003}}]
 Let $(M, \G)$ be a compact $n$-dimensional Riemannian manifold. For any point $p \in M$, the metric ball $B(p, R)$ with center $p$ and radius $0< R \leqslant \frac{1}{2} \inj (M, \G)$ satisfies
 \begin{equation}
  \vol_{\G} (B(p, R)) \geqslant \beta_n R^n,
 \end{equation}
 where $\beta_n$ is a constant only depending on the manifold dimension $n$. 
\end{theorem}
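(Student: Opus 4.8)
The plan is to carry out Croke's integral-geometric argument \cite{Croke1980}, localized to the ball $B(p,R)$. The key consequence of the hypothesis $R \leqslant \tfrac12 \inj(M,\G)$ is that for every $x \in B(p,R/2)$ the exponential map $\exp_x$ is a diffeomorphism on the ball of radius $2R$ in $T_x M$; hence any two points of $B(p,R/2)$ are joined by a unique minimizing geodesic, that geodesic lies inside $B(p,R)$, and all geodesic segments appearing below are free of conjugate points.

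A clean route is to pass through a local isoperimetric inequality: one shows that $\vol_{\G}(\partial B(p,r))^n \geqslant c_n \, \vol_{\G}(B(p,r))^{n-1}$ for every $0 < r \leqslant \tfrac12 \inj(M,\G)$, with $c_n$ depending only on $n$, and then integrates. Setting $f(r) = \vol_{\G}(B(p,r))$, the coarea formula applied to the distance function from $p$ gives $f'(r) = \vol_{\G}(\partial B(p,r))$ for almost every $r$; since $f(r) \sim \omega_n r^n$ as $r \to 0$ (Theorem \ref{Croke_balls}), the function $f^{1/n}$ is absolutely continuous with $\bigl(f^{1/n}\bigr)' = \tfrac1n f^{1/n - 1} f' \geqslant \tfrac1n c_n^{1/n}$ almost everywhere, and integrating from $0$ to $R$ with $f(0) = 0$ yields $f(R) \geqslant c_n R^n / n^n$, which is the theorem with $\beta_n = c_n / n^n$.

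It remains to establish the local isoperimetric inequality (alternatively, the same machinery can be run to bound $\vol_{\G}(B(p,r))$ directly). This is where integral geometry enters: write $\vol_{\G}(B(p,r/2))^2$ as a double integral over $B(p,r/2) \times B(p,r/2)$ and express the integral in the second variable in geodesic polar coordinates centred at the first --- legitimate by the first paragraph --- to get an integral over triples $(x,v,t)$ with $x \in B(p,r/2)$, $v \in U_x M$ and $0 \leqslant t \leqslant r$, whose integrand is the volume density of $\exp_x$ along $\gamma_{x,v}$. Then invoke Santal\'o's reparametrization of the Liouville measure on the unit tangent bundle over $B(p,r)$ by the inward geodesic flow, together with the fact that each such geodesic segment eventually crosses the (non-empty) sphere $\partial B(p,r)$, to bound this triple integral by a dimensional constant times a power of $r$ times $\vol_{\G}(\partial B(p,r))$. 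The genuinely hard step --- and the main obstacle --- is bounding the volume-density factor from below along conjugate-point-free geodesics: this is precisely the Berger--Kazdan inequality, the same analytic lemma that underpins the global embolic inequality of Theorem \ref{Berger1980_embolic_inequality}. The delicate parts are to organize the localization so that the bare hypothesis $R \leqslant \tfrac12 \inj(M,\G)$ guarantees the hypotheses of Berger--Kazdan along every segment that appears, and to keep track of the dimensional constants so that $\beta_n$ can be exhibited explicitly.
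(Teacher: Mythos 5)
The paper does not prove this theorem --- it is quoted as a black box from Croke \cite{Croke1980} (see also Berger \cite[Theorem~149]{Berger2003}) and used as an input to the covering argument --- so there is no in-paper argument against which to compare your proposal. Judged on its own terms, your outline correctly reproduces the strategy of Croke's paper and of Berger's exposition: reduce the ball-volume bound to a local isoperimetric inequality $\vol_{\G}(\partial B(p,r))^n \geqslant c_n \vol_{\G}(B(p,r))^{n-1}$ for $0 < r \leqslant \frac12 \inj(M,\G)$; integrate the resulting differential inequality for $f(r) = \vol_{\G}(B(p,r))$ via the coarea formula and the asymptotics $f(r) \sim \omega_n r^n$; and obtain the isoperimetric inequality by integral geometry, i.e.\ a Santal\'o-type reparametrization of Liouville measure by the geodesic flow together with the Berger--Kazdan lower bound on the Jacobian of the exponential map along conjugate-point-free segments. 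The constant you would extract, $\beta_n = c_n/n^n$, is consistent with the value $\frac{2^{n-1}\sigma_{n-1}^n}{n^n\sigma_n^{n-1}}$ recorded in the paper's remark.

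That said, what you have written is a road map rather than a proof, and you acknowledge this. The two steps that carry all the weight --- the precise Santal\'o/visibility computation localized to $B(p,r)$, and the Berger--Kazdan density estimate --- are named but not carried out. One small quantitative slip worth flagging: a minimizing geodesic between two points of $B(p,R/2)$ need only lie in $B(p,3R/2)$, not $B(p,R)$ as you assert (the triangle inequality gives $d(p,z)\leqslant d(p,x)+d(x,y) < R/2 + R$ for $z$ on the segment from $x$ to $y$); this does not break the argument, since $3R/2 < \inj(M,\G)$ still guarantees no conjugate points, but the radii in the localization have to be chosen with a bit more care than the sketch suggests if one wants to land on Croke's explicit constant.
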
 
\begin{remark}
 A non-optimal value of $\beta_n$ is given in Croke \cite{Croke1980} (also see Berger \cite{Berger2003}),
 \begin{equation*}
  \beta_n = \frac{2^{n-1} \ts \sigma_{n-1}^n}{n^n \ts \sigma_n^{n-1}},
 \end{equation*}
 recall that $\sigma_n$ stands for the volume of unit Euclidean $n$-sphere.
\end{remark}
We choose $R_0 = \frac{1}{2} \inj (M, \G) $. Suppose that for an admissible ball $B(p, R) \subset M$, the radius $R \in (0, R_0)$, then we have the following estimates,
\begin{align*}
 \vol_{\G} (M) & \geqslant \vol_{\G} (B(p, 5R_0))  \\ 
  & > \alpha \vol_{\G} ( B(p, R_0) ) \\ 
  & > \alpha^2 \vol_{\G} ( B(p, 5^{-1} R_0) ) \\ 
  & \hspace{20pt} \vdots \\ 
  & > \alpha^k \vol_{\G} ( B(p, 5^{-k + 1} R_0 ) ) \\
  & > \alpha^k \vol_{\G} ( B(p, R) ) \\
  & > \alpha^k \ts \beta_n R^n \\
  & > \alpha^k \beta_n ( 5^{-k} R_0 )^n , 
\end{align*}
so that
\begin{equation}\label{integer_k}
 k < \frac{ \log_5{\frac{ \vol_{\G} (M) }{\beta_n R_0^n} }  }{-n + \log_5{\alpha}} . 
\end{equation}

\vskip 20pt

\subsection{Refined version of covering trick}

Let $\Gamma = \gamma_1 \cup \gamma_2 \cdots \cup \gamma_b$, with $\{ \gamma_1, \gamma_2 , \cdots , \gamma_b \}$ a system of loops representing the homology basis of $H_1 (M)$. Note that the first Betti number $b_1 (M)$ is equal to $b$. We choose a maximal system of balls $\{ B(p_j, R_j) \}_{j = 1}^N$ with centers located on $\Gamma$. Then in terms of the maximality, $\{ B(p, 2R_j) \}_{j=1}^N$ covers the graph $\Gamma$. And we have the following relation between the first Betti number $b_1 (M)$ and the number $N$ of balls,
\begin{equation*}
 b_1 (M) \leqslant T - N + C ,
\end{equation*}
where $T$ is the number of pairwise intersections in the cover $\{ B(p_j, 2R_j) \}_{j=1}^N$, and $C$ is the number of components of the open cover $\{ B(p_j, 2R_j) \}_{j = 1}^N$. It is easy to see that $N \geqslant C$, so that $T$ is an upper bound of $b_1(M)$. 

The number of pairwise intersections in $\{ B(p_j, 2R_j) \}_{j = 1}^N$ can be estimated in terms of the injectivity radius. One fact is that if $B(p_{j_{i}}, 2R_{j_{i}}) \cap B(p_{j_{\ell}} , 2R_{j_{\ell}} ) \neq \emptyset  $, and $R_{j_{\ell}} \leqslant R_{j_{i}}$, then we must have
\begin{equation}
 B( p_{j_{\ell}} , R_{j_{\ell}} ) \subset B(p_{j_{i}} , 5 R_{j_{i}} ).
\end{equation}
Assume that the ball $B(p_j, 2R_j)$ has nonempty intersections with balls in $\{ B(p_{j_1}, 2R_{j_1}), B(p_{j_2}, 2R_{j_2}), \cdots , B(p_{j_{T_j}}, 2R_{j_{T_j}}) \}$, and $R_{j_i} \leqslant R_{j}$.   
Let $\displaystyle \alpha = 5^{ n + \sqrt{\log_5{\frac{V}{\beta_n R_0^n}} } }$. So we choose $\alpha = 5^{n + \theta}$ with $\theta = \sqrt{ \log_5{ \frac{V}{\beta_n \ts R_0^{n}} } }$. Then we have
\begin{align*}
 V & = \vol_{\G}(M) \\
  & > \sum_{j = 1}^N \vol_{\G}( B(p_j, R_j) ) \\ 
  & = \alpha^{-1} \sum_{j = 1}^N \alpha \vol_{\G} ( B(p_j , R_j) ) \\
  & \geqslant \alpha^{-1} \sum_{j=1}^N \vol_{\G} ( B(p_j, 5R_j) ) \\
  & \geqslant \alpha^{-1} \sum_{j=1}^N \sum_{i = 1}^{T_j} \vol_{\G} ( B( p_{j_i}, R_{j_i} ) )  \\
  & \geqslant \alpha^{-1} \sum_{j = 1}^N \sum_{i = 1}^{T_j} \beta_n R_{j_i}^n \\
  & \geqslant \alpha^{-1} \sum_{j = 1}^N \sum_{i = 1}^{T_j} \beta_n \left( 5^{-k_{i, j}} R_0 \right)^n \\ 
  & \geqslant \alpha^{-1} \sum_{j=1}^N \sum_{i = 1}^{T_j} \beta_n 5^{ - n \sqrt{ \log_5{ \frac{V}{\beta_n R_0^n} } } } R_0^n \\
  & = \alpha^{-1} \ts T \ts \beta_n 5^{ - n \sqrt{ \log_5{ \frac{V}{\beta_n R_0^n} } } } R_0^n , 
\end{align*}
where the last inequality holds since the upper bound of $k_{i, j}$ showed in (\ref{integer_k}), and recall that $T$ is equal to the total number of pairwise intersections. Hence we have
\begin{align*}
 T & \leqslant \frac{V}{\beta_n R_0^n} 5^{  n \sqrt{ \log_5{ \frac{V}{\beta_n R_0^n}  } } } \cdot \alpha \\
     & \leqslant \frac{V}{\beta_n R_0^n} 5^{n +  (n + 1)\sqrt{ \log_5 { \frac{V}{\beta_n R_0^n} } }}  . 
\end{align*}
Let $R_0 = \frac{1}{2} \inj (M, \G) $. Hence the lower bound in terms of the first Betti number is obtained,
\begin{equation}
  \frac{\vol_{\G}(M)}{\inj(M, \G)^n} \geqslant C_n \ts \frac{b_1(M)}{\exp{ \left( C_n^{\prime} \sqrt{ \log{b_1(M)} } \right) }},  
\end{equation}
where $C_n$ and $C_n^{\prime}$ are two constants only depending on the manifold dimension $n$. Finally we obtain the estimate (\ref{embolic}) in Theorem \ref{theorem_main}.
\begin{remark}
 Combined with the value of $\beta_n$ given in Croke's estimate, we can explicitly provide values of $C_n$ and $C_n^{\prime}$ as follows,
 \begin{equation*}
  C_n = \frac{5 \tts \sigma_{n-1}^n}{ 2 \tts n^n \sigma_n^{n-1}} , \quad C_n^{\prime} = (n + 1) \sqrt{\log{5}}. 
 \end{equation*}
\end{remark}

\section{Proof of Theorem \ref{theorem_cover}}

In the proof of Theorem \ref{theorem_main}, we verified that the number $T$ of pairwise intersections in the cover $\{ B(p_j, 2 R_j) \}_{j = 1}^N$ has the following upper bound,
\begin{equation*}
 T \leqslant \frac{V}{\beta_n \ts R_0^n} \ts 5^{n + (n+1) \sqrt{ \log_5 { \frac{V}{\beta_n R_0^n } } } },
\end{equation*}
so after taking an infimum over all Riemannian metrics $\G$ on $M$, we have
\[ T \leqslant D_n \ts \emb(M) \ts 5^{ (n+1) \sqrt{ \log_5{\emb(M) } } } , \]
where $D_n$ is a constant only depending on the manifold dimension $n$.

\end{document}